\newtheorem{Def}{Definition}[section]
\newtheorem{Thm}[Def]{Theorem}
\newtheorem{Prop}[Def]{Proposition}
\newtheorem{Lemma}[Def]{Lemma}
\newtheorem{Conj}[Def]{Conjecture}
\newtheorem{Rmk}[Def]{Remark}
\newcommand{\ra}{\rightarrow}
\newcommand{\C}{\mathbb{C}}
\newcommand{\PP}{\mathbb{P}}
\newcommand{\R}{\mathbb{R}}
\newcommand{\Z}{\mathbb{Z}}
\newcommand{\Q}{\mathbb{Q}}
\newcommand{\HH}{\mathrm{HH}}
\newcommand{\D}{\mathcal{D}}
\newcommand{\Hom}{\mathrm{Hom}}
\newcommand{\id}{\mathrm{id}}
\newcommand{\HMF}{\mathrm{HMF^{\mathrm{gr}}}}
\newcommand{\T}{\mathrm{T}}
\newcommand{\cO}{\mathcal{O}}
\begin{document}
\title{Entropy of an autoequivalence on Calabi--Yau manifolds}
\author{Yu-Wei Fan}
\date{}

\maketitle

\begin{abstract}
We prove that the categorical entropy of the autoequivalence $\T_{\cO}\circ(-\otimes\cO(-1))$
on a Calabi--Yau manifold of dimension $d\geq3$ is the unique positive real number $\lambda$ satisfying
$$
\sum_{k\geq 1}\frac{\chi(\cO(k))}{e^{k\lambda}}=e^{(d-1)t}.
$$
We then use this result to construct the first counterexamples of a conjecture on categorical entropy by Kikuta and Takahashi.
\end{abstract}

\section{Introduction}
In the pioneering work \cite{DHKK}, Dimitrov--Haiden--Katzarkov--Kontsevich introduced 
the notion of categorical entropy of an endofunctor on a triangulated category with a split generator.
A typical example of such triangulated category is given by the bounded derived category $\D^b(X)$ of coherent sheaves on a variety $X$. When $X$ is a smooth projective variety with ample (anti-)canonical bundle,
the group of autoequivalences on $\D^b(X)$ is well-understood \cite{BO}.
It is generated by tensoring line bundles, automorphisms on the variety, and degree shifts.
The categorical entropy in this case was computed by Kikuta and Takahashi \cite{KT}.
On the other hand, when the variety is Calabi--Yau, there are much more autoequivalences on $\D^b(X)$ because of the presence of spherical objects.

In this article, we show that the composite of the simplest spherical twist $\T_{\cO_X}$ with $-\otimes\cO(-H)$ already gives an interesting categorical entropy.

\begin{Thm}[= Theorem \ref{Thm:A}]
Let $X$ be a strict Calabi--Yau manifold over $\C$ of dimension $d\geq 3$.
Consider the autoequivalence $\Phi:=\T_{\cO_X}\circ(-\otimes\cO(-H))$ on $\D^b(X)$.
The categorical entropy $h_t(\Phi)$ is a positive function in $t\in\R$.
Moreover, for any $t\in\R$, $h_t(\Phi)$ is the unique $\lambda>0$
satisfying
$$
\sum_{k\geq 1}\frac{\chi(\cO(kH))}{e^{k\lambda}}=e^{(d-1)t}.
$$
\end{Thm}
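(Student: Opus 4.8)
The plan is to compute $h_t(\Phi)=\lim_{n\to\infty}\frac1n\log\delta_t(G,\Phi^n G)$ by controlling the complexity of $\Phi^n$ applied to a generator, and to match the resulting growth rate with the root of the displayed equation. First I would record the basic structure: since $X$ is strict Calabi--Yau, $\cO_X$ is spherical with $\Hom^\bullet(\cO_X,\cO_X)=\C\oplus\C[-d]$; Kodaira vanishing gives $R\Gamma(\cO(mH))=H^0(\cO(mH))$ in degree $0$ for $m\ge 1$ and $=H^0(\cO(-mH))^\vee[-d]$ in degree $d$ for $m\le -1$; and the defining triangle of $\T_{\cO}$ reads $R\Gamma(A(-H))\otimes\cO\to A(-H)\to\Phi(A)$. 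Applied to a line bundle $\cO(-mH)$ with $m\ge 0$, this shows that $\Phi$ sends it to an iterated cone built from $\cO(-(m+1)H)$ together with $\chi(\cO((m+1)H))$ copies of $\cO[1-d]$. Fixing a split generator $G$ that is a direct sum of line bundles $\cO(-mH)$ and contains $\cO$, the same triangle governs every summand, so I would reduce the computation to the orbit $\Phi^n\cO$ (the other summands produce the identical growth rate).

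For the upper bound I would iterate the above triangle to produce an explicit Postnikov resolution of $\Phi^n\cO$ whose graded pieces are shifts $\cO(-mH)[j]$. Writing $N_n(m,j)$ for the multiplicity of $\cO(-mH)[j]$, the mechanism ``shift the line bundle, create $\chi(\cO((m+1)H))$ copies of $\cO[1-d]$'' yields $N_n(m,j)=c_{n-m,j}$ with $c_{n+1,j}=\sum_{k\ge1}\chi(\cO(kH))\,c_{n+1-k,\,j+d-1}$. Setting $C_n(t)=\sum_j c_{n,j}e^{jt}$ this becomes the renewal recursion $C_{n+1}(t)=e^{-(d-1)t}\sum_{k\ge1}\chi(\cO(kH))C_{n+1-k}(t)$, with generating function $\sum_n C_n(t)z^n=(1-\mathcal{A}(z))^{-1}$ where $\mathcal{A}(z)=e^{-(d-1)t}\sum_{k\ge1}\chi(\cO(kH))z^k$. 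Since $\delta_t(G,\cO(-mH)[j])=e^{jt}\delta_t(G,\cO(-mH))$ and the cone-subadditivity of $\delta_t$ gives $\delta_t(G,\Phi^n\cO)\le\sum_{m,j}N_n(m,j)e^{jt}\delta_t(G,\cO(-mH))$, the growth rate is governed by the smallest positive pole $z=e^{-\lambda}$ of the generating function, i.e.\ by $\mathcal{A}(e^{-\lambda})=1$, which is precisely $\sum_{k\ge1}\chi(\cO(kH))e^{-k\lambda}=e^{(d-1)t}$. The technical point to discharge here is that $\delta_t(G,\cO(-mH))$ grows at most subexponentially in $m$ (via a fixed finite Beilinson-type resolution), so that the factor $\max_{m\le n}\delta_t(G,\cO(-mH))$ contributes nothing to the exponential rate; this yields $h_t(\Phi)\le\lambda$.

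For the lower bound I would use the general estimate $h_t(\Phi)\ge\limsup_n\frac1n\log\sum_j\dim\Hom(E,\Phi^n\cO[j])e^{-jt}$, valid for any test object $E$ that is a summand of $G$, because the weighted Poincar\'e function $P_t(-)=\sum_j\dim\Hom(E,-[j])e^{-jt}$ is subadditive in triangles, satisfies $P_t(-[1])=e^tP_t(-)$, and hence obeys $P_t(-)\le\delta_t(G,-)\cdot P_t(G)$. Choosing $E=\cO$ or a skyscraper $\cO_x$ and feeding in the Postnikov resolution, a direct check with the recursion shows that the $E_1$-page of the resulting spectral sequence has weighted total dimension of rate exactly $\lambda$. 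The main obstacle is then to prove that this rate survives the differentials: one must show that the weighted dimension of $\Hom^\bullet(E,\Phi^n\cO)$ itself, not merely that of its associated graded, still grows at rate $\lambda$. I expect this non-cancellation to be the crux of the whole argument, and it is genuinely delicate because an extremal-degree estimate alone recovers only the weaker rate $\log\chi(\cO(H))$; a finer analysis --- showing that the Postnikov tower is sufficiently non-split, or computing the surviving groups directly through the $\Phi^{-1}$-orbit of $E$ via $\Hom(E,\Phi^n\cO[j])=\Hom(\Phi^{-n}E,\cO[j])$ --- will be required.

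Finally, both uniqueness and positivity are immediate from the analytic properties of the series. Because $\chi(\cO(kH))$ is a degree-$d$ polynomial in $k$ with positive leading coefficient $H^d/d!$, the function $f(\lambda)=\sum_{k\ge1}\chi(\cO(kH))e^{-k\lambda}$ converges exactly for $\lambda>0$, is strictly decreasing, and satisfies $f(0^+)=+\infty$ and $f(+\infty)=0$. Hence for every $t\in\R$ the positive number $e^{(d-1)t}$ is attained by $f$ at a unique $\lambda>0$, which by the two bounds equals $h_t(\Phi)$; in particular $h_t(\Phi)>0$ for all $t$, so $h_t(\Phi)$ is a positive function of $t$.
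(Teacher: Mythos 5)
Your strategy is sound in outline, and your upper bound via the Postnikov tower reproduces exactly the renewal recursion that drives the paper's computation; but the lower bound is left open, and you say so yourself. That is a genuine gap: without it you have only $h_t(\Phi)\le\lambda$ together with the weaker extremal-degree bound, so the theorem is not proved. The missing idea is not a delicate non-splitting argument about the Postnikov tower; it is a degree-concentration statement that makes all potential cancellation vanish for trivial reasons. The paper's Lemma 3.2 shows by induction that $\Hom^{l}(\cO,\Phi^n(G')\otimes\cO(-k))$ is nonzero only for $l$ in the arithmetic progression $d,\,d+(d-1),\,\ldots,\,d+n(d-1)$. The base case $n=0$ is Kodaira vanishing plus Serre duality (which is why one tests against $\cO(-k)$ with $k>0$, i.e.\ works with the generator $G'=\bigoplus_{i=1}^{d+1}\cO(-i)$ and applies $\Phi$ in the negative direction of twists). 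In the inductive step, applying $\Hom^{\bullet}(\cO,-)$ to the defining triangle of $\Phi$ produces a long exact sequence whose first term is concentrated in degrees $d+m(d-1)$, $0\le m\le n-1$, and whose third term is concentrated in degrees $d+m(d-1)$, $1\le m\le n$; since $d-1\ge 2$, no connecting map can be nonzero (it would have to relate degrees differing by $1$ inside a progression of gap $\ge 2$), so the long exact sequence splits into short exact sequences and the dimensions add \emph{exactly}. This is precisely the non-cancellation you flagged as the crux, and it is why the hypothesis $d\ge 3$ appears in the statement.

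Once you have that lemma, your two-sided estimate collapses into a single exact computation: by the Hom-characterization of entropy (Proposition 2.6 of the paper, valid for a pair of split generators $G$, $G'$), $h_t(\Phi)=\lim_n\frac1n\log\sum_{k=1}^{d+1}B_{n,k}$ where $B_{n,k}$ is the weighted dimension of $\Hom^{\bullet}(\cO,\Phi^n(G')\otimes\cO(-k))$, the splitting gives the recursion $B_{n,k}=B_{n-1,k+1}+a_k e^{-(d-1)t}B_{n-1,1}$ on the nose, and the rest is the renewal analysis you already wrote down (including your correct uniqueness and positivity argument from the analytic properties of $f(\lambda)=\sum_k\chi(\cO(kH))e^{-k\lambda}$). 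I would also caution that your proposed alternative route via $\Hom(E,\Phi^n\cO[j])=\Hom(\Phi^{-n}E,\cO[j])$ does not obviously help: $\Phi^{-1}$ involves the inverse spherical twist and leads to the same cancellation question in mirror form. The degree-gap argument is the efficient way through.
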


A simple argument on Hilbert polynomial shows that this equation defines an algebraic curve over $\Q$ in the coordinate $(e^t,e^{\lambda})$.
Thus the algebraicity conjecture in \cite[Question 4.1]{DHKK} holds in this case.

The notion of categorical entropy is a categorical analogue of topological entropy of a continuous surjective self-map on a compact metric space.
There is a fundamental theorem of topological entropy due to Gromov and Yomdin.

\begin{Thm}[\cite{Gro1,Gro2,Yom}]\label{Thm:GY}
Let $M$ be a compact K\"ahler manifold and let $f:M\ra M$ be a surjective holomorphic map. Then
$$
h_{\mathrm{top}}(f)=\log\rho(f^*),
$$
where $h_{\mathrm{top}}(f)$ is the topological entropy of $f$, and $\rho(f^*)$ is the spectral radius of $f^*:H^*(M;\C)\ra H^*(M;\C)$.
\end{Thm}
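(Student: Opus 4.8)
The plan is to establish the two inequalities $h_{\mathrm{top}}(f)\leq\log\rho(f^*)$ and $h_{\mathrm{top}}(f)\geq\log\rho(f^*)$ separately, following Gromov and Yomdin respectively. Both directions funnel through the comparison of topological entropy with the exponential growth of volumes under iteration, together with two purely complex-geometric facts. First, volumes of complex subvarieties are cohomological: by Wirtinger, a $k$-dimensional complex cycle $V$ has $\mathrm{Vol}(V)=\frac{1}{k!}\int_V\omega^k$ for a fixed Kähler form $\omega$, so volume growth under $f$ is governed by the linear action $f^*$ on $H^*(M;\C)$. Second, for a holomorphic self-map of a compact Kähler manifold the spectral radius decomposes as $\rho(f^*)=\max_{0\le k\le d}\lambda_k(f)$, where $d=\dim_\C M$ and $\lambda_k(f)=\rho(f^*|_{H^{k,k}(M)})$ is the $k$-th dynamical degree; the maximum is attained on the even cohomology because mixed Hodge--Riemann inequalities bound the action on the off-diagonal and odd pieces by the $\lambda_k$.

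For the upper bound I would follow Gromov. Equip $M^{n+1}$ with the product Kähler form $\Omega_n=\sum_{i=0}^n p_i^*\omega$ and consider the iterated graph $\Gamma_n=\{(x,f(x),\dots,f^n(x)):x\in M\}$, a complex submanifold of dimension $d$ isomorphic to $M$ via the first projection. A covering argument shows that $h_{\mathrm{top}}(f)$ is at most the exponential growth rate of $\mathrm{Vol}(\Gamma_n)$. By Wirtinger, $\mathrm{Vol}(\Gamma_n)=\frac{1}{d!}\int_{\Gamma_n}\Omega_n^d$; transporting to $M$ and using $p_i|_{\Gamma_n}=f^i$, this equals $\frac{1}{d!}\int_M(\sum_{i=0}^n(f^i)^*\omega)^d$, a sum of mixed intersection numbers $\int_M(f^{i_1})^*\omega\wedge\cdots\wedge(f^{i_d})^*\omega$. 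Each such number depends only on the cohomology classes $(f^i)^*[\omega]$; the dominant terms are the ones of the form $\int_M(f^n)^*\omega^k\wedge\omega^{d-k}$, whose exponential rate is exactly $\log\lambda_k$. Since $\rho(f^*)=\max_{0\le k\le d}\lambda_k(f)$, taking logarithms and $\limsup$ yields $h_{\mathrm{top}}(f)\leq\log\rho(f^*)$.

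For the lower bound I would invoke Yomdin's theorem, which asserts that for a $C^\infty$ map the topological entropy dominates the exponential volume growth rate $v_k(f)=\limsup_n\frac{1}{n}\log\mathrm{Vol}(f^n(V))$ of the forward iterates of any $C^\infty$ submanifold $V$. It then suffices to exhibit, for the dimension $k$ realizing $\rho(f^*)=\lambda_k(f)$, a complex $k$-dimensional submanifold whose image volumes grow at rate $\lambda_k$; by the Wirtinger computation above this is achieved by a generic complete intersection of hyperplane-type sections, so that $\max_k v_k(f)=\max_k\lambda_k(f)=\rho(f^*)$. Yomdin's inequality then gives $h_{\mathrm{top}}(f)\geq\log\lambda_k=\log\rho(f^*)$, and combined with the upper bound this proves the theorem.

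The main obstacle is Yomdin's lower bound. Its proof uses no complex geometry but rather a delicate $C^r$ analysis: the heart is the semialgebraic reparametrization lemma (Gromov's algebraic lemma, as sharpened by Yomdin), which covers the $f^n$-image of a unit disk by a subexponentially growing number of $C^r$ reparametrizations on which all derivatives up to order $r$ are bounded by $1$. The discrepancy between volume growth and entropy is then controlled by an error term decaying like $\frac{1}{r}\log\|Df\|_{\infty}$, which tends to $0$ as $r\to\infty$ and hence vanishes for $C^\infty$ (here holomorphic) maps. Tracking the number of reparametrization pieces and the $C^r$-norm bounds uniformly in $n$ is the technical crux, and it is precisely this step that forces the hypothesis that $f$ be genuinely smooth rather than merely $C^1$, since for finitely differentiable maps the equality can fail.
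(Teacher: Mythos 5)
The paper contains no proof of Theorem \ref{Thm:GY}: it is quoted purely as background motivation and attributed to \cite{Gro1,Gro2,Yom}, so there is no internal argument to compare against, and your sketch must be judged against those references. Judged that way, it correctly reconstructs the standard route: Gromov's upper bound via the iterated graphs $\Gamma_n$, Wirtinger's theorem reducing $\mathrm{Vol}(\Gamma_n)$ to mixed intersection numbers of the classes $(f^i)^*[\omega]$, the identity $\rho(f^*)=\max_k\lambda_k(f)$ for holomorphic maps, and Yomdin's $C^\infty$ lower bound via the reparametrization lemma, including the correct explanation of why the $\frac{1}{r}\log\|Df\|_\infty$ error forces smoothness.

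Two points deserve flagging. First, a genuine (if repairable) gap: your lower bound realizes the growth rate $\lambda_k$ by ``a generic complete intersection of hyperplane-type sections,'' which implicitly assumes $M$ projective; a general compact K\"ahler manifold need not carry any complex submanifolds of intermediate dimension. The standard fix --- and what Yomdin actually proved, namely Shub's entropy conjecture --- is that $h_{\mathrm{top}}(f)\geq\log\rho(f^*)$ holds for \emph{every} $C^\infty$ self-map of a compact manifold, with no complex geometry: the mass of the image current bounds the norm of the pushed-forward homology class, so $\mathrm{Vol}(f^n\circ\sigma)\geq c\,\|f^n_*\sigma_*[V]\|$ for any smooth $\sigma:V\to M$, and by Thom representability a multiple of a suitable class is realized by such a $\sigma$; the K\"ahler hypothesis is needed only for Gromov's upper bound. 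Second, two steps in your upper bound are asserted rather than argued and should at least be cited: the covering argument bounding entropy by $\mathrm{Vol}(\Gamma_n)$ requires a uniform lower bound on volumes of balls in the complex submanifold $\Gamma_n$ (Lelong's monotonicity formula), and the claim that the diagonal terms $\int_M(f^n)^*\omega^k\wedge\omega^{d-k}$ dominate the general mixed terms $\int_M(f^{i_1})^*\omega\wedge\cdots\wedge(f^{i_d})^*\omega$ requires a submultiplicativity or log-concavity input (Khovanskii--Teissier for nef classes, or Cauchy--Schwarz via the Hodge--Riemann relations); both are standard and appear in \cite{Gro2}, but without them the sketch does not close.
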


Kikuta and Takahashi proposed the following analogous conjecture on categorical entropy.

\begin{Conj}[\cite{KT}]\label{Conj:KT}
Let $X$ be a smooth proper variety over $\C$ and let $\Phi$ be an autoequivalence on 
$\D^b(X)$. Then
$$
h_0(\Phi)=\log\rho(\HH_{\bullet}(\Phi)),
$$
where $\HH_{\bullet}(\Phi):\HH_{\bullet}(X)\ra\HH_{\bullet}(X)$ is the induced $\C$-linear isomorphism on the Hochschild homology group of $X$, and $\rho(\HH_{\bullet}(\Phi))$
is its spectral radius.
\end{Conj}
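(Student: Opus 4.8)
The statement is the categorical analogue of the Gromov--Yomdin theorem (Theorem \ref{Thm:GY}), with the linear map $\HH_{\bullet}(\Phi)$ on Hochschild homology playing the role of the pullback $f^*$ on cohomology. Accordingly, the plan is to separate the claimed equality into two inequalities and to treat each in the spirit of the classical proof: a lower bound $h_0(\Phi)\geq\log\rho(\HH_{\bullet}(\Phi))$ (the ``Gromov'' direction) and an upper bound $h_0(\Phi)\leq\log\rho(\HH_{\bullet}(\Phi))$ (the ``Yomdin'' direction).

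For the lower bound I would exploit the compatibility of the Hochschild action with the Euler form via the Chern character. Fixing a split generator $G$, the Mukai vector is an intertwiner, $\HH_{\bullet}(\Phi)^n v(G)=v(\Phi^n G)$, and the Mukai pairing computes the Euler characteristic, so that
$$
\chi(G,\Phi^n G)=\langle v(G),\,\HH_{\bullet}(\Phi)^n v(G)\rangle,\qquad v(G):=\ch(G)\sqrt{\td(X)}.
$$
Now $\lvert\chi(G,\Phi^n G)\rvert\leq\dim_{\C}\bigoplus_i\Hom^i(G,\Phi^n G)$, and the exponential growth rate of the right-hand side is at most $h_0(\Phi)$ (indeed equal to it, for a split generator, by the characterization of categorical entropy). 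Hence the growth rate of the pairing on the left is at most $h_0(\Phi)$. Since Mukai vectors span the algebraic part of $\HH_{\bullet}(X)$ and the Mukai pairing is non-degenerate there, varying $G$ detects the spectral radius of $\HH_{\bullet}(\Phi)$ on that part; this is the tractable, and known, half of the conjecture.

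The upper bound is the crux, and this is where I expect the obstruction. Proving it would require showing that $\Phi^n G$ can be assembled from shifted copies of $G$ using exact triangles whose $t=0$ weighted count grows no faster than $\rho(\HH_{\bullet}(\Phi))^n$ up to polynomial factors; equivalently, one must bound the minimal categorical complexity $\delta_0(G,\Phi^n G)$ above by the homological spectral radius. Classically this is exactly the content of Yomdin's theorem, whose proof rests on volume growth and semialgebraic geometry, and there is no evident categorical substitute: Hochschild homology is an additive invariant that forgets the finer triangulated structure, so cancellation in the Euler characteristic can conceal exponentially many morphisms that nonetheless contribute to $\delta_0$.

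In fact the upper bound---and hence the conjecture---fails in general, and the autoequivalence of Theorem \ref{Thm:A} pinpoints the failure. By Theorem \ref{Thm:A} one has $h_0(\Phi)>0$, whereas a direct computation gives $\rho(\HH_{\bullet}(\Phi))=1$ once $d\geq 3$: on the algebraic part the twist $\T_{\cO_X}$ acts as a reflection in the class $\ch(\cO_X)$, with eigenvalues on the unit circle, and $-\otimes\cO(-H)$ acts as the unipotent operator $e^{-H\,\cup}$, cup product with $H$ being nilpotent, and for $d\geq 3$ their composite remains quasi-unipotent---in contrast to the surface case $d=2$, where the analogous composite is hyperbolic; on the transcendental classes, in particular the holomorphic volume form, $\HH_{\bullet}(\Phi)$ acts with eigenvalues of modulus one. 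Thus $\log\rho(\HH_{\bullet}(\Phi))=0<h_0(\Phi)$: the exponential growth of $\bigoplus_i\Hom^i(G,\Phi^n G)$ is invisible to $\chi(G,\Phi^n G)$, which remains subexponential, so the extra categorical complexity measured by $\delta_0$ casts no homological shadow. The ``Yomdin'' inequality is therefore not merely hard but false, which is precisely the counterexample announced in the introduction.
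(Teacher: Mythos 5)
You correctly read the situation: the statement is a conjecture that the paper \emph{refutes}, the tractable half is the lower bound $h_0(\Phi)\geq\log\rho$ (via Kikuta--Shiraishi--Takahashi), and the content of the counterexample is the computation $\rho(\Phi_{H^*})=1$ paired with $h_0(\Phi)>0$ from Theorem \ref{Thm:A}. But your justification of $\rho(\Phi_{H^*})=1$ has a genuine gap, and the claim as you state it is false. You assert that since $(\T_{\cO})_{H^*}$ has eigenvalues on the unit circle and $-\otimes\cO(-H)$ acts by the unipotent $e^{-H\cup}$, ``for $d\geq 3$ their composite remains quasi-unipotent.'' No argument is given, and none can be: a product of two operators of spectral radius one can be hyperbolic, as you yourself concede happens for $d=2$, so parity of some further input must enter. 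Indeed the paper's Remark \ref{Rmk:Genki} (due to Ouchi) shows that for \emph{odd} $d$ one has $h_0(\Phi)=\log\rho([\Phi])>0$, because Lemma \ref{Lemma:degree} puts all Hom's in degrees of a single parity so that no cancellation occurs in $\chi(G,\Phi^n G')$; hence $\rho(\Phi_{H^*})=e^{h_0(\Phi)}>1$ for odd $d\geq 3$, directly contradicting your ``once $d\geq 3$'' claim. Your intuition that the entropy is ``invisible to the Euler characteristic'' is exactly backwards in odd dimensions: there the Euler characteristic sees all of it.

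The paper's actual proof is quite different and explains the restrictions in Proposition \ref{Prop:B} (even $d\geq 4$, and $X$ a degree-$(d+2)$ hypersurface in $\C\PP^{d+1}$ rather than a general strict Calabi--Yau). One passes through Orlov's equivalence $\Psi:\D^b(X)\simeq\HMF(W)$ and the identity of Ballard--Favero--Katzarkov intertwining $\Phi'=\T_{\cO}\circ(-\otimes\cO(1))$ with the grade-shift functor $\tau$, which satisfies $\tau^{d+2}=[2]$; this gives $(\Phi')_{H^*}^{d+2}=\id_{H^*}$ and uses the hypersurface hypothesis essentially. Then, for \emph{even}-dimensional strict Calabi--Yau manifolds, $(\T_{\cO})_{H^*}$ is an involution (Huybrechts, Corollary 8.13), which transfers the finite-order statement from $\Phi'$ to $\Phi=\T_{\cO}\circ(-\otimes\cO(-1))$, yielding $\Phi_{H^*}^{d+2}=\id_{H^*}$ and hence $\rho(\Phi_{H^*})=1$. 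So the quasi-unipotence you hoped to get by inspecting the two factors separately is in fact a consequence of a hidden categorical relation (monodromy around the Gepner point) plus an evenness constraint, neither of which appears in your argument. To repair your proposal you would need to replace the unsupported composite-spectrum claim by this relation (or an equivalent computation of the action on the algebraic part $\langle 1,H,\ldots,H^d\rangle$ distinguishing the parities of $d$), and restrict the counterexample to even $d$ and to hypersurfaces as the paper does.
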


Note that one can replace $\HH_{\bullet}(\Phi)$ by the induced Fourier-Mukai type action on the cohomology
$\Phi_{H^*}:H^*(X;\C)\rightarrow H^*(X;\C)$,
because there is a commutative diagram
    \begin{equation*}
        \xymatrix{\HH_{\bullet}(X)\ar[rr]^{\HH_{\bullet}(\Phi)}\ar[d]_{I_K^{X}}&&\HH_{\bullet}(X)\ar[d]^{I_K^{X}}\\ H^*(X;\C)\ar[rr]^{\Phi_{H^*}}&&H^*(X;\C),}
    \end{equation*}
where $I_K^{X}$ is the modified Hochschild--Kostant--Rosenberg isomorphism \cite[Theorem 1.2]{MSM}.
Hence $\rho(\HH_{\bullet}(\Phi))=\rho(\Phi_{H^*})$.

Conjecture \ref{Conj:KT} has been proved in several cases. See \cite{K,KST,KT,Yos}.

Now we explain the motivation of the present work, namely why we should not expect
Conjecture \ref{Conj:KT} to hold in general.
We first note that Theorem \ref{Thm:GY} does not hold if $f$ is not holomorphic.
For example, there is a construction by Thurston \cite{Thur} of pseudo-Anosov maps that act trivially on the cohomology, and thus have zero spectral radius.
Moreover, Dimitrov--Haiden--Katzarkov--Kontsevich \cite[Theorem 2.18]{DHKK} showed that the categorical entropy of the induced autoequivalence on the derived Fukaya category is equal to $\log\lambda$, where $\lambda>1$ is the stretch factor of the pseudo-Anosov map.
Hence the analogous statement of Conjecture \ref{Conj:KT} is not true if $\D^b(X)$ is replaced by the derived Fukaya categories of the symplectic manifolds considered in \cite{DHKK,Thur}.

Motivated by homological mirror symmetry, one may expect to find counterexamples of Conjecture \ref{Conj:KT} on the derived categories of coherent sheaves on Calabi--Yau manifolds.
In other words, the discrepancy between complex and symplectic geometry should lead to the discrepancy between Theorem \ref{Thm:GY} and Conjecture \ref{Conj:KT}.

Using Theorem 1.1, we construct counterexamples of Conjecture \ref{Conj:KT}.

\begin{Prop}[= Proposition \ref{Prop:B}]
For any even integer $d\geq4$,
let $X$ be a Calabi--Yau hypersurface in $\C\PP^{d+1}$ of degree $(d+2)$ and $\Phi=\T_{\cO_X}\circ(-\otimes\cO(-1))$. Then
$$
h_0(\Phi)> 0=\log\rho(\Phi_{H^*}).
$$
In particular, Conjecture \ref{Conj:KT} fails in this case.
\end{Prop}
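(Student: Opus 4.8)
The plan is to invoke Theorem 1.1 to reduce the strict inequality $h_0(\Phi)>0$ to a concrete verification about the Euler characteristics $\chi(\cO(kH))$, and separately to compute the spectral radius of the Fourier--Mukai action $\Phi_{H^*}$ on cohomology to confirm it equals $1$ (so its logarithm is $0$). Setting $t=0$ in Theorem 1.1, the entropy $h_0(\Phi)$ is the unique $\lambda>0$ solving $\sum_{k\geq 1}\chi(\cO(kH))e^{-k\lambda}=1$. Since Theorem 1.1 already asserts that $h_t(\Phi)$ is a positive function, the inequality $h_0(\Phi)>0$ is immediate from that theorem; the genuine content of the proposition lies in computing $\rho(\Phi_{H^*})$ and checking it is $1$ precisely when $d$ is even. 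So the first step I would carry out is to make the cohomological computation explicit for the hypersurface $X\subset\C\PP^{d+1}$ of degree $d+2$.

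For the cohomological side, I would compute the action $\Phi_{H^*}$ on $H^*(X;\C)$ as a composition of two pieces. The functor $-\otimes\cO(-H)$ acts on cohomology by multiplication by $\ch(\cO(-H))=e^{-h}$, where $h=c_1(\cO(H))$ is the hyperplane class restricted to $X$; this is a unipotent operator with all eigenvalues equal to $1$. The spherical twist $\T_{\cO_X}$ acts by the Fourier--Mukai transform with kernel the cone of $\cO_{X\times X}\to\cO_{\Delta}$; its action on the even cohomology $H^{\mathrm{even}}(X;\C)$ can be written down explicitly, and on a strict Calabi--Yau the twist around $\cO_X$ is again unipotent (a reflection-type transvection along the class of $\cO_X$) whose eigenvalues are roots of unity or $1$. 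I would then argue that the composite $\Phi_{H^*}$ has all eigenvalues of absolute value $1$ exactly in the even-dimensional case: the Calabi--Yau condition $K_X=0$ forces the canonical bundle to play no role, and the parity of $d$ controls the signs appearing in the Mukai pairing and hence in the characteristic polynomial. The key arithmetic input is that for even $d$ the relevant integers conspire so that the characteristic polynomial of $\Phi_{H^*}$ is a product of cyclotomic factors, giving spectral radius $1$.

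The main obstacle I anticipate is the explicit diagonalization (or eigenvalue estimate) of $\Phi_{H^*}$, since $\T_{\cO_X}$ does not act diagonally on the full cohomology ring and the interaction between the twist and the tensor operation must be controlled. Concretely, one must understand the action on the primitive middle cohomology $H^d(X;\C)$, whose dimension grows with $d$, and verify that no eigenvalue escapes the unit circle. I would handle this by restricting attention to the subspace of algebraic classes (the span of $1,h,\dots,h^d$ together with the Mukai vector of $\cO_X$), on which both functors act by explicit matrices, and observe that the transcendental part $H^d_{\mathrm{prim}}$ is preserved with the tensor operation acting trivially there and the twist acting by a sign, so the eigenvalues there are $\pm 1$; the sign is $+1$ precisely when $d$ is even. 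Thus on every summand the eigenvalues lie on the unit circle, yielding $\rho(\Phi_{H^*})=1$ and hence $\log\rho(\Phi_{H^*})=0$. Combining this with the positivity of $h_0(\Phi)$ from Theorem 1.1 completes the proof that Conjecture \ref{Conj:KT} fails.
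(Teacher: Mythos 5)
Your first step---deducing $h_0(\Phi)>0$ directly from Theorem \ref{Thm:A} at $t=0$---matches the paper exactly, and you correctly identify that the real content is showing $\rho(\Phi_{H^*})=1$. But your argument for that equality has a genuine gap. You propose to write $\Phi_{H^*}$ as the composite of the unipotent operator ``multiply by $e^{-h}$'' with the reflection-type operator $(\T_{\cO})_{H^*}$, and to conclude that all eigenvalues lie on the unit circle because each factor's do. That inference is false: a product of two operators each with spectral radius $1$ can easily have spectral radius greater than $1$ (e.g.\ a product of two unipotent $2\times2$ matrices can be hyperbolic), and this is precisely the mechanism that makes the entropy positive in the first place. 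On the span of $1,h,\dots,h^d$ together with $v(\cO_X)$---which you correctly isolate as the hard part---your proof reduces to the assertion that ``the relevant integers conspire so that the characteristic polynomial is a product of cyclotomic factors.'' That is a restatement of the claim to be proved, not a proof, and it is genuinely delicate: by Remark \ref{Rmk:Genki}, for \emph{odd} $d$ the same functor satisfies $h_0(\Phi)=\log\rho([\Phi])>0$, so the spectral radius really does escape the unit circle there. Nothing in your setup explains why evenness of $d$ saves the algebraic part.

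The paper sidesteps eigenvalue computations entirely by proving that $\Phi_{H^*}$ has \emph{finite order}. It introduces $\Phi':=\T_{\cO}\circ(-\otimes\cO(1))$ and uses the Orlov equivalence $\D^b(X)\simeq\HMF(W)$ together with \cite[Proposition 5.8]{BFK} to identify $\Phi'$ with the grade-shift functor $\tau$, which satisfies $\tau^{d+2}=[2]$; hence $(\Phi')_{H^*}^{d+2}=\id$. Then, since $(\T_{\cO})_{H^*}$ is an involution on $H^*(X;\C)$ for even-dimensional strict Calabi--Yau manifolds (\cite[Corollary 8.13]{Huy}), one has $\Phi_{H^*}=(\T_{\cO})_{H^*}^{-1}\circ(-\otimes\cO(-1))_{H^*}$, which is conjugate to $(\Phi')_{H^*}^{-1}$, so $\Phi_{H^*}^{d+2}=\id$ and $\rho(\Phi_{H^*})=1$. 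This is where the parity of $d$ genuinely enters. To repair your approach you would need to supply an input of comparable strength; the direct diagonalization you sketch does not provide one.
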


Interestingly, as pointed out to the author by Genki Ouchi, the same autoequivalence does not produce counterexamples of Conjecture \ref{Conj:KT} if $X$ is an \emph{odd} dimensional Calabi--Yau manifold (see Remark \ref{Rmk:Genki}).

After the present paper was posted online, Ouchi also proved a result similar to Proposition \ref{Prop:B} for K3 surfaces \cite{Ouchi}.

\section{Preliminaries}

\subsection{Categorical entropy}

We recall the notion of categorical entropy introduced by Dimitrov-Haiden-Katzarkov-Kontsevich \cite{DHKK}.

Let $\D$ be a triangulated category.
A triangulated subcategory is called \emph{thick} if it is closed under taking direct summands.
The \emph{split closure} of an object $E\in\D$ is the smallest thick triangulated subcategory containing $E$.
An object $G\in\D$ is called a \emph{split generator} if its split closure is $\D$.

\begin{Def}[\cite{DHKK}]
Let $E$ and $F$ be non-zero objects in $\D$.
If $F$ is in the split closure of $E$, then the \emph{complexity} of $F$ relative to $E$ is defined to be the function
$$
\delta_t(E,F):=
\inf\left\{ \displaystyle\sum_{i=1}^k e^{n_i t} \,\middle|\,
\begin{xy}
(0,5) *{0}="0", (20,5)*{A_{1}}="1", (30,5)*{\dots}, (40,5)*{A_{k-1}}="k-1", 
(60,5)*{F\oplus F'}="k",
(10,-5)*{E[n_{1}]}="n1", (30,-5)*{\dots}, (50,-5)*{E[n_{k}]}="nk",
\ar "0"; "1"
\ar "1"; "n1"
\ar@{-->} "n1";"0" 
\ar "k-1"; "k" 
\ar "k"; "nk"
\ar@{-->} "nk";"k-1"
\end{xy}
\, \right\},
$$
where $t$ is a real parameter that keeps track of the shiftings.
\end{Def}

Note that when the category $\D$ is $\Z_2$-graded in the sense that
$[2]\cong\mathrm{id}_\D$,
only the value at $t$ equals to zero, $\delta_0(E,F)$, will be of any use.

\begin{Def}[\cite{DHKK}]
Let $D$ be a triangulated category with a split generator $G$
and let $\Phi:\D\ra\D$ be an endofunctor.
The \emph{categorical entropy} of $\Phi$ is defined to be the function $h_t(\Phi):\R\ra[-\infty,\infty)$
given by
$$
h_t(\Phi):=\lim_{n\ra\infty}\frac{1}{n}\log\delta_t(G,\Phi^nG).
$$
\end{Def}

\begin{Lemma}[\cite{DHKK}]
The limit $\lim_{n\ra\infty}\frac{1}{n}\log\delta_t(G,\Phi^nG)$ exists in $[-\infty,\infty)$ for every $t\in\R$, and is independent of the choice of the split generator $G$.
\end{Lemma}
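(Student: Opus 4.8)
The plan is to reduce the whole statement to Fekete's subadditivity lemma, using two elementary but crucial properties of the complexity function that I would establish first. The first is the \emph{triangle inequality}
$$
\delta_t(E,G)\leq\delta_t(E,F)\cdot\delta_t(F,G),
$$
valid whenever $F$ lies in the split closure of $E$ and $G$ lies in the split closure of $F$; the second is the \emph{functoriality estimate}
$$
\delta_t(\Phi E,\Phi F)\leq\delta_t(E,F),
$$
valid for any exact endofunctor $\Phi$. The triangle inequality is proved by splicing a filtration of $G\oplus G'$ whose cones are shifted copies $F[m_j]$ with, for each $j$, a filtration of $F[m_j]$ whose cones are shifted copies of $E$; shifting a filtration by $m_j$ multiplies its cost by $e^{m_j t}$, so the total cost of the spliced filtration is $\delta_t(E,F)\cdot\sum_j e^{m_j t}=\delta_t(E,F)\cdot\delta_t(F,G)$ up to the infimum. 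The functoriality estimate is immediate once one notes that an exact, additive $\Phi$ carries a filtration realizing $\delta_t(E,F)$ to a filtration of $\Phi F$ by shifted copies of $\Phi E$ with the same exponents $n_i$.

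For \textbf{existence}, I would fix a split generator $G$ and set $a_n:=\delta_t(G,\Phi^n G)$. Since $G$ split-generates $\D$, each $\Phi^n G$ lies in its split closure, so every $a_n$ is a finite nonnegative real number. Combining the two inequalities above gives
$$
\delta_t(G,\Phi^{m+n}G)\leq\delta_t(G,\Phi^m G)\cdot\delta_t(\Phi^m G,\Phi^m(\Phi^n G))\leq\delta_t(G,\Phi^m G)\cdot\delta_t(G,\Phi^n G),
$$
where the last step applies the functoriality estimate $m$ times. Hence $\log a_{m+n}\leq\log a_m+\log a_n$, so the sequence $(\log a_n)$ is subadditive in $[-\infty,\infty)$. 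By Fekete's lemma the limit $\lim_{n\ra\infty}\frac1n\log a_n$ exists and equals $\inf_n\frac1n\log a_n$, which is bounded above by $\log a_1<\infty$; thus the limit lies in $[-\infty,\infty)$ as claimed.

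For \textbf{independence of the split generator}, let $G'$ be another split generator. Applying the triangle inequality twice and then the functoriality estimate yields
$$
\delta_t(G',\Phi^n G')\leq\delta_t(G',G)\cdot\delta_t(G,\Phi^n G)\cdot\delta_t(G,G'),
$$
where $\delta_t(G',G)$ and $\delta_t(G,G')$ are finite since each of $G,G'$ split-generates $\D$. Taking $\frac1n\log$ and letting $n\ra\infty$, the two constant factors contribute nothing in the limit, so $\lim_n\frac1n\log\delta_t(G',\Phi^n G')\leq\lim_n\frac1n\log\delta_t(G,\Phi^n G)$. The symmetric argument gives the reverse inequality, proving equality.

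I expect the \textbf{main obstacle} to be the careful proof of the triangle inequality rather than the asymptotic bookkeeping: one must splice the two systems of exact triangles coherently, track how each global shift $m_j$ rescales the cost by $e^{m_j t}$, and correctly dispose of the auxiliary direct summands $F'$, $G'$ that appear because we work with the split closure rather than the strict triangulated closure. Verifying that $\Phi$ is exact and additive (so that the functoriality estimate is legitimate) is the other point requiring attention; once both properties are in hand, the existence and independence claims are formal consequences of Fekete's lemma.
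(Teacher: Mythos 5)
Your argument is correct and is essentially the proof given in the cited source [DHKK] (the paper itself states this lemma without proof): the two complexity inequalities $\delta_t(E,G)\leq\delta_t(E,F)\,\delta_t(F,G)$ and $\delta_t(\Phi E,\Phi F)\leq\delta_t(E,F)$, submultiplicativity, Fekete's lemma, and the two-sided comparison between generators are exactly the standard route. No substantive gaps.
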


We will use the following proposition to compute categorical entropy.

\begin{Prop}[\cite{DHKK,KT}]\label{Prop:DHKK}
Let $G$ and $G'$ be split generators of $\D^b(X)$
and let $\Phi$ be an autoequivalence on $\D^b(X)$.
Then the categorical entropy equals to
$$
h_t(\Phi)=\lim_{n\ra\infty}\frac{1}{n}\log\sum_{l\in\Z}\dim\Hom^l_{\D^b(X)}(G,\Phi^nG')e^{-lt}.
$$
\end{Prop}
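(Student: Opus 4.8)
The plan is to sandwich the complexity $\delta_t(G,F)$ between two constant multiples of the Poincaré mass
$$
P_t(E,F):=\sum_{l\in\Z}\dim\Hom^l_{\D^b(X)}(E,F)\,e^{-lt},
$$
which is finite for all $E,F\in\D^b(X)$ since the $\Hom$ groups are finite-dimensional and vanish for $|l|\gg0$. Once I establish
$$
\frac{P_t(G,F)}{P_t(G,G)}\ \le\ \delta_t(G,F)\ \le\ P_t(G,F)\qquad(\star)
$$
for every $F$ in the split closure of $G$, I take $F=\Phi^nG$, apply $\tfrac1n\log$, and let $n\to\infty$: the two constant factors $P_t(G,G)^{\pm1}$ contribute $\tfrac1n\log P_t(G,G)\to0$, so $h_t(\Phi)=\lim_n\tfrac1n\log P_t(G,\Phi^nG)$. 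The replacement of $G$ by the second generator $G'$ in the slot $\Phi^nG'$ is then handled separately.

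For the lower bound in $(\star)$ I would apply the cohomological functor $\Hom^\bullet(G,-)$ to a Postnikov presentation of $F\oplus F'$ by objects $G[n_1],\dots,G[n_k]$ nearly realizing the infimum that defines $\delta_t(G,F)$. The iterated long exact sequences give $\dim\Hom^l(G,F)\le\sum_i\dim\Hom^{l+n_i}(G,G)$, and reindexing yields $P_t(G,F)\le\bigl(\sum_i e^{n_it}\bigr)\,P_t(G,G)$; taking the infimum over presentations gives $P_t(G,F)\le\delta_t(G,F)\,P_t(G,G)$. This step is routine.

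The upper bound $\delta_t(G,F)\le P_t(G,F)$ is the main obstacle and is where I would spend the real effort. Fix a dg-enhancement of $\D^b(X)$ and set $A:=R\Hom(G,G)$, a dg algebra; then $M:=R\Hom(G,F)$ is a perfect $A$-module, and $F$ is recovered from $M$ through the equivalence between the split closure of $G$ and the perfect derived category of $A$. Using the homological perturbation lemma I would pass to a minimal $A_\infty$-model of $M$, whose underlying graded vector space is $H^\bullet(M)=\Hom^\bullet(G,F)$; this minimal model is realized by a finite one-sided twisted complex built from shifted copies of the free module $A$, with exactly $\dim\Hom^l(G,F)$ copies of $A[-l]$ appearing. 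Transporting this twisted complex back across the equivalence exhibits $F$ as built from the blocks $G[-l]$ with multiplicity $\dim\Hom^l(G,F)$, each contributing $e^{-lt}$ to the complexity; summing gives $\delta_t(G,F)\le\sum_l\dim\Hom^l(G,F)\,e^{-lt}=P_t(G,F)$. The delicate points are the existence of a \emph{finite} minimal twisted complex for a perfect module and the careful bookkeeping of the shifts against the sign convention of the complexity.

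Finally, to replace $G$ by $G'$ I would invoke submultiplicativity, $\delta_t(G,\Phi^nG)\le\delta_t(G,\Phi^nG')\,\delta_t(\Phi^nG',\Phi^nG)$, together with the invariance of complexity under the autoequivalence $\Phi$, which gives $\delta_t(\Phi^nG',\Phi^nG)=\delta_t(G',G)$, a constant independent of $n$; the symmetric inequality holds as well. Hence $\tfrac1n\log\delta_t(G,\Phi^nG)$ and $\tfrac1n\log\delta_t(G,\Phi^nG')$ share the same limit, and combining this with $(\star)$ applied to $F=\Phi^nG'$ (which again lies in the split closure of $G$) yields the asserted formula $h_t(\Phi)=\lim_n\tfrac1n\log\sum_l\dim\Hom^l(G,\Phi^nG')e^{-lt}$.
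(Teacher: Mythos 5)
The paper offers no proof of this proposition --- it is quoted from \cite{DHKK,KT} --- so the comparison is with the standard argument there. Your overall strategy (sandwich $\delta_t(G,\Phi^nG')$ between $P_t(G,\Phi^nG')$ times constants independent of $n$, which die under $\tfrac1n\log$), your lower bound, and your generator swap via submultiplicativity and $\Phi$-invariance of $\delta_t$ all match that argument and are correct. The gap is in the upper bound: the inequality $\delta_t(G,F)\le P_t(G,F)$ is false, and the mechanism you propose for it does not work. Take $X=\PP^1$, $G=\cO\oplus\cO(-1)$ (a split generator) and $F=\cO(-2)$. Then $\Hom^{\bullet}(G,F)=H^{\bullet}(\PP^1,\cO(-2))\oplus H^{\bullet}(\PP^1,\cO(-1))=\C[-1]$, so $P_0(G,F)=1$; but by Krull--Schmidt in $\D^b(\PP^1)$ the indecomposable object $\cO(-2)$ is not a direct summand of any $G[n_1]$, so no one-step tower exists and $\delta_0(G,F)\ge 2>P_0(G,F)$. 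The source of the error is a conflation: the minimal $A_\infty$-model of the perfect module $M=R\Hom(G,F)$ does have underlying graded space $H^{\bullet}(M)$, but it is an $A_\infty$-\emph{module} over $A$ (or over the minimal model of $A$), not a one-sided twisted complex built from $\dim H^l(M)$ copies of the \emph{free} module $A[-l]$. If such a twisted complex existed in the example above, it would force $F\cong G[-1]$, which is absurd; in general a finite free/twisted resolution of $M$ needs more summands than $\dim H^{\bullet}(M)$.

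The correct upper bound, and the one used in \cite{DHKK}, is $\delta_t(G,F)\le C(t)\,P_t(G,F)$ with a constant $C(t)$ depending only on $G$ and $t$. It comes from smoothness of $X$: the diagonal bimodule $A$ is perfect, hence quasi-isomorphic to a finite one-sided twisted complex built from shifted copies $(A\otimes_{\C}A)[m_j]$, $j=1,\dots,N$; tensoring with $M$ over $A$ and using $M\simeq H^{\bullet}(M)$ as a complex of vector spaces exhibits $F$ as built from the blocks $\Hom^{\bullet}(G,F)\otimes G[m_j]$, which yields $\delta_t(G,F)\le\bigl(\sum_{j}e^{m_jt}\bigr)P_t(G,F)$ up to sign conventions on the shifts. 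Since $\tfrac1n\log C(t)\ra 0$, this weaker inequality is all your sandwich needs; with this replacement the remainder of your argument goes through.
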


\subsection{Spherical objects and spherical twists}
We recall the notions of spherical objects and spherical twists introduced by Seidel-Thomas \cite{ST}.
They are the categorical analogue of Lagrangian spheres in symplectic manifolds and the Dehn twists along Lagrangian spheres.

\begin{Def}[\cite{ST}]
An object $S\in\D^b(X)$ is called \emph{spherical} if $S\otimes\omega_X\cong S$ and
$\Hom^{\bullet}_{\D^b(X)}(S,S)=\C\oplus\C[-\dim X]$.
\end{Def}

\begin{Def}[\cite{ST}]
The \emph{spherical twist} $\T_S$ with respect to a spherical object $S$ is an autoequivalence on $\D^b(X)$ given by
$$
E\mapsto\T_S(E):=\mathrm{Cone}(\Hom^{\bullet}_{\D^b(X)}(S,E)\otimes S\ra E).
$$
\end{Def}

We also recall the definition of strict Calabi--Yau manifolds.

\begin{Def}\label{Def:Strict CY}
A smooth projective variety $X$ is called \emph{strict Calabi--Yau}
if $\omega_X\cong\cO_X$ and $H^i(X,\cO_X)=0$ for all $0<i<\dim X$.
This is equivalent to the condition that $\cO_X$ is a spherical object.
\end{Def}

\section{Computation of categorical entropy}

We fix the notations and assumptions that will be used throughout this section.
We work over complex number field $\C$.

\subsection*{Notations}
\begin{itemize}
\item $X$ is a strict Calabi--Yau manifold (Definition \ref{Def:Strict CY}) with a very ample line bundle $H$.
\item $d:=\mathrm{dim}_{\C}X\geq3$.
\item $\cO:=\cO_X$ and $\cO(k):=\cO_X(kH)$.
\item $a_k:=h^0(\cO(k))=\chi(\cO(k))$ for $k>0$.
\item $G:=\oplus_{i=1}^{d+1}\cO(i)$ and $G':=\oplus_{i=1}^{d+1}\cO(-i)$.
By a result of Orlov \cite{Orlov}, both $G$ and $G'$ are split generators of $\D^b(X)$.
\end{itemize}

The goal of this section is to prove the following theorem.

\begin{Thm}\label{Thm:A}
Let $X$ be a strict Calabi--Yau manifold over $\C$ of dimension $d\geq 3$.
Consider the autoequivalence $\Phi:=\T_{\cO}\circ(-\otimes\cO(-1))$ on $\D^b(X)$.
The categorical entropy $h_t(\Phi)$ is a positive function in $t\in\R$.
Moreover, for any $t\in\R$, $h_t(\Phi)$ is the unique $\lambda>0$
satisfying
$$
\sum_{k\geq 1}\frac{\chi(\cO(k))}{e^{k\lambda}}=e^{(d-1)t}.
$$
\end{Thm}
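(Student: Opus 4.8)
The plan is to apply Proposition \ref{Prop:DHKK} with the split generators $G,G'$, reducing the problem to the growth rate of
$$
\sum_{l\in\Z}\dim\Hom^l(G,\Phi^nG')e^{-lt},
$$
which I would compute by resolving each $\Phi^n(\cO(-j))$ into shifts of line bundles. The starting point is the action of $\Phi$ on a twisted line bundle: for every $m\geq0$ and every shift $s$ there is a distinguished triangle
$$
\cO(-m-1)[s]\ra\Phi(\cO(-m)[s])\ra\cO^{\oplus a_{m+1}}[s+1-d]\xrightarrow{+1},
$$
where $a_{m+1}=\chi(\cO(m+1))$. Indeed $\Phi(\cO(-m))=\T_{\cO}(\cO(-m-1))$, and since $-m-1<0$ the strict Calabi--Yau hypotheses ($\omega_X\cong\cO_X$, together with Kodaira vanishing and Serre duality) give $\Hom^{\bullet}(\cO,\cO(-m-1))=\C^{a_{m+1}}[-d]$ concentrated in degree $d$; feeding this into the definition of the spherical twist yields the triangle. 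Note that starting from $\cO(-j)$ with $j\geq1$ only nonpositive twists ever occur, so I never need the cohomology of ample twists.

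Next I would iterate. Since $\Phi$ is exact, the triangle propagates through a Postnikov-type filtration, so $\Phi^n(\cO(-j))$ admits a finite filtration whose graded pieces are all of the form $\cO(-m)[s]$. The bookkeeping is a two-move recursion: each piece either raises its twist index ($m\mapsto m+1$ with $s$ fixed) or \emph{resets} ($m\mapsto0$, $s\mapsto s-(d-1)$) at the cost of a multiplicative weight $a_{m+1}$. In particular every piece has shift $s=-(d-1)r$, where $r$ is the number of resets in its history, and its multiplicity is the product of the weights collected along the way.

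The crux is a no-cancellation statement, and this is exactly where $d\geq3$ enters. A direct computation gives $\Hom^{\bullet}(G,\cO(-m)[s])=\bigoplus_{i=1}^{d+1}\C^{a_{m+i}}$ placed in the single degree $d-s$; hence a piece with $r$ resets contributes to $\Hom^{\bullet}(G,-)$ only in degree $d+(d-1)r$. As $r$ varies these degrees are spaced by $d-1\geq2$, so the set of degrees in which the graded pieces contribute contains no two consecutive integers. Consequently every differential in the spectral sequence of the filtration (which raises total degree by one) vanishes, the sequence degenerates at $E_1$, and the dimensions add:
$$
\dim\Hom^l(G,\Phi^n(\cO(-j)))=\sum_{\text{pieces }\cO(-m)[s]}(\text{mult})\cdot\Big(\sum_{i=1}^{d+1}a_{m+i}\Big)\,[\,l=d-s\,].
$$
I expect this degeneration to be the main obstacle to pin down carefully, since for $d=2$ the degrees would be adjacent and genuine cancellation could occur.

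Finally I would extract the growth rate by a generating-function argument. Summing the additive formula against $z^n$ and over $j=1,\dots,d+1$, the recursion organizes into blocks between resets; each reset block from twist $0$ has generating function $e^{-(d-1)t}\sum_{k\geq1}a_kz^k$, so these enter geometrically and produce a factor $\big(1-e^{-(d-1)t}\sum_{k\geq1}a_kz^k\big)^{-1}$, with the first block and final run analytic at the relevant point. Since $a_k$ is a polynomial in $k$, the series $\sum_{k\geq1}a_kz^k$ has radius of convergence $1$ and increases from $0$ to $+\infty$ on $(0,1)$, so $e^{-(d-1)t}\sum_{k\geq1}a_kz^k=1$ has a unique root $z^*\in(0,1)$, which by Pringsheim's theorem is the dominant singularity of the generating function. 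Therefore $h_t(\Phi)=-\log z^*=:\lambda$, and writing $z^*=e^{-\lambda}$ turns the defining equation into $\sum_{k\geq1}\chi(\cO(k))e^{-k\lambda}=e^{(d-1)t}$. The same monotonicity shows that $\lambda$ is the unique positive solution and that $\lambda>0$ for every $t$, giving the asserted positivity of $h_t(\Phi)$.
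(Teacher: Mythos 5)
Your proposal is correct and follows essentially the same route as the paper: the same defining triangle $\cO(-m-1)\ra\Phi(\cO(-m))\ra\cO^{\oplus a_{m+1}}[1-d]$, the same use of $d\geq3$ to force the $\Hom$'s into the arithmetic progression of degrees $d+(d-1)r$ so that dimensions add (the paper packages this as Lemma \ref{Lemma:degree}, proved by induction on split long exact sequences rather than via a degenerating spectral sequence of the filtration), and the same resulting renewal-type recursion with kernel $e^{-(d-1)t}\sum_{k\geq1}a_kz^k$. The only cosmetic difference is the final extraction of the growth rate: you locate the dominant singularity of the generating function via Pringsheim, whereas the paper divides the recursion (\ref{Cn}) by $C_n$ and passes to the limit --- both give the characteristic equation $\sum_{k\geq1}\chi(\cO(k))e^{-k\lambda}=e^{(d-1)t}$ and the positivity of $\lambda$.
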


We begin with a lemma that will be crucial in the computation of categorical entropy.

\begin{Lemma}\label{Lemma:degree}
	\begin{enumerate}
	\item For any integers $n\geq 0$ and $k>0$,
$\Hom^{l}(\cO,\Phi^n(G')\otimes\cO(-k))$ is zero except for $l=d,d+(d-1),\ldots,d+n(d-1)$.
	\item For $n\geq0$ and $k>0$, define
$$
B_{n,k}:=\sum_{m=0}^n\dim\Hom^{d+m(d-1)}(\cO,\Phi^n(G')\otimes\cO(-k))
\cdot e^{-m(d-1)t}.
$$
In particular, $B_{0,k}=\dim\Hom^d(\cO,\cO(-k))=a_k$.
There is a recursive relation among $B_{n,k}$'s:
\begin{equation}\label{Recursion}
B_{n,k}=B_{n-1,k+1}+a_ke^{-(d-1)t}B_{n-1,1}.  \tag{*}
\end{equation}
	\end{enumerate}
\end{Lemma}

\begin{proof}
We prove the first assertion by induction on $n$.
The statement is true when $n=0$ by Kodaira vanishing theorem and Serre duality.

By the definition of $\Phi$, there is an exact triangle:
$$
\Phi^{n-1}(G')\otimes\cO(-1)\rightarrow\Phi^n(G')\rightarrow
\Hom^{\bullet}(\cO,\Phi^{n-1}(G')\otimes\cO(-1))\otimes\cO[1]\xrightarrow{+1}.
$$

By tensoring it with $\cO(-k)$ and applying $\Hom^{\bullet}(\cO,-)$, we get a long exact sequence:
\begin{multline}
\Hom^{\bullet}(\cO,\Phi^{n-1}(G')\otimes\cO(-k-1))\rightarrow\Hom^{\bullet}(\cO,\Phi^n(G')\otimes\cO(-k))
\\
\rightarrow\Hom^{\bullet}(\cO,\Phi^{n-1}(G')\otimes\cO(-1))\otimes\Hom^{\bullet}(\cO,\cO(-k))[1]
\xrightarrow{+1}\cdots \notag
\end{multline}

Suppose the statement is true for $n-1$. Then the first complex in the long exact sequence is non-zero only at degree $d,d+(d-1),\ldots,d+(n-1)(d-1)$, and the third complex is non-zero only at degree $d+(d-1),d+2(d-1),\ldots,d+n(d-1)$.
Since we assume the dimension $d\geq 3$, the long exact sequence splits into short exact sequences, and the proof follows.

The recursive relation in the second assertion also follows from the above long exact sequence.
\end{proof}

%
%
%


\begin{Lemma}\label{Lemma:recursion}
Define
$$
P_{s,k}:=\sum_{\substack{i_1+\cdots+i_q=s\\i_1\geq k}} a_{i_1}a_{i_2}\cdots a_{i_q}
e^{-q(d-1)t},
$$
where the summation runs over all ordered partitions of $s$ with the first piece no less than $k$. Then we have
$$
B_{n,k}=a_{n+k}+\sum_{s=1}^n a_s P_{n+k-s,k}.
$$
\end{Lemma}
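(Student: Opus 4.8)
The plan is to prove the formula by induction on $n$, with the recursion \eqref{Recursion} serving as the engine. The base case $n=0$ is immediate: the sum $\sum_{s=1}^{0}$ is empty, so the formula reduces to $B_{0,k}=a_k$, which matches the definition. For the inductive step I would assume the formula for $n-1$ (for \emph{every} $k>0$) and feed the two instances $B_{n-1,k+1}$ and $B_{n-1,1}$ into \eqref{Recursion}. Using $B_{n-1,k+1}=a_{n+k}+\sum_{s=1}^{n-1}a_s P_{n+k-s,\,k+1}$ and $B_{n-1,1}=a_n+\sum_{s=1}^{n-1}a_s P_{n-s,\,1}$, this presents $B_{n,k}$ as
$$
B_{n,k}=a_{n+k}+\sum_{s=1}^{n-1}a_s P_{n+k-s,\,k+1}
+a_k e^{-(d-1)t}\Big(a_n+\sum_{s=1}^{n-1}a_s P_{n-s,\,1}\Big),
$$
and the goal is to identify this with the target $a_{n+k}+\sum_{s=1}^{n}a_s P_{n+k-s,\,k}$.

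The decisive ingredient is a recursion for the $P$'s themselves, obtained by sorting the ordered partitions counted by $P_{s,k}$ according to the size of their first part. Separating those whose first part is exactly $k$ from those whose first part is at least $k+1$, and stripping off that first part in the former case, yields
$$
P_{s,k}=P_{s,k+1}+a_k e^{-(d-1)t}\,P_{s-k,\,1},
$$
valid under the natural conventions that the empty partition contributes $P_{0,1}=1$, that $P_{m,\bullet}=0$ for $m<0$, and more generally $P_{m,k}=0$ whenever $0<m<k$.

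Applying this identity to each summand $P_{n+k-s,\,k}$ of the target turns it into
$$
a_{n+k}+\sum_{s=1}^{n}a_s P_{n+k-s,\,k+1}
+a_k e^{-(d-1)t}\sum_{s=1}^{n}a_s P_{n-s,\,1}.
$$
Comparing with the expression above for $B_{n,k}$, the $a_{n+k}$ terms agree, and the factors multiplied by $a_k e^{-(d-1)t}$ agree once one observes that the $s=n$ summand of $\sum_{s=1}^{n}a_s P_{n-s,\,1}$ is $a_n P_{0,1}=a_n$, supplying exactly the stray term $a_k e^{-(d-1)t}a_n$. The only remaining discrepancy is the extra $s=n$ summand $a_n P_{k,\,k+1}$ in the rewritten target, and this vanishes because no partition of $k$ can have first part at least $k+1$. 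Hence the two sides coincide, closing the induction.

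The only place where care is really needed — and thus the main obstacle — is the bookkeeping of the degenerate cases of $P_{s,k}$: fixing the conventions for the empty partition and for negative indices so that the identity $P_{s,k}=P_{s,k+1}+a_k e^{-(d-1)t}P_{s-k,\,1}$ holds uniformly, and then checking that the off-by-one mismatch between the ranges $\sum_{s=1}^{n-1}$ and $\sum_{s=1}^{n}$ is absorbed precisely by the boundary value $P_{0,1}=1$ together with the vanishing term $P_{k,\,k+1}=0$. Once these conventions are pinned down, the algebraic matching is routine.
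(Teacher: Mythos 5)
Your proof is correct and follows exactly the route the paper indicates: induction on $n$ driven by the recursion (\ref{Recursion}), with the composition-splitting identity $P_{s,k}=P_{s,k+1}+a_k e^{-(d-1)t}P_{s-k,1}$ making the inductive step close. The paper's own proof is just the one-line instruction ``Induction on $n$ and use the recursive relation (\ref{Recursion})''; you have supplied the details it omits, including the boundary conventions $P_{0,1}=1$ and $P_{k,k+1}=0$, and they check out.
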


\begin{proof}
Induction on $n$  and use the recursive relation (\ref{Recursion}).
\end{proof}

Now we are ready to prove Theorem \ref{Thm:A}.

\begin{proof}[Proof of Theorem \ref{Thm:A}]
By Proposition \ref{Prop:DHKK}, the categorical entropy can be written as
\begin{equation*}
\begin{split}
h_t(\Phi) & = \lim_{n\ra\infty}\frac{1}{n}\log\sum_l \dim\Hom^l (G,\Phi^n(G'))e^{-lt} \\
 & =  \lim_{n\ra\infty}\frac{1}{n}\log\sum_{k=1}^{d+1}\sum_l
\dim\Hom^l(\cO,\Phi^n(G')\otimes\cO(-k))e^{-lt} \\
 & = \lim_{n\ra\infty}\frac{1}{n}\log\sum_{k=1}^{d+1} B_{n,k}.
\end{split}
\end{equation*}

By Lemma \ref{Lemma:recursion} and the fact that $\{a_k\}$ is an increasing sequence, 
we have $B_{n,1}\leq B_{n,k+1}\leq B_{n+k,1}$. Hence
$$
h_t(\Phi)= \lim_{n\ra\infty}\frac{1}{n}\log  B_{n,1}.
$$

Define $C_n:=B_{n,1}e^{-(d-1)t}$ and $a'_k:=a_ke^{-(d-1)t}$. Then again by Lemma \ref{Lemma:recursion},
\begin{equation*}
\begin{split}
C_n & = \Big(a_{n+1}+\sum_{s=1}^na_sP_{n+1-s,1}\Big)e^{-(d-1)t} \\
  & =\sum_{i_1+\cdots+i_q=n+1}a'_{i_1}a'_{i_2}\cdots a'_{i_q}.
\end{split}
\end{equation*}

Thus
\begin{equation*}
C_n=a'_1C_{n-1}+a'_2C_{n-2}\cdots+a'_nC_0+a'_{n+1}.
\end{equation*}

Notice that $h_t(\Phi)= \lim_{n\ra\infty}\frac{1}{n}\log  B_{n,1}=\lim_{n\ra\infty}\frac{1}{n}\log C_n$ is a positive real number for any $t\in\R$, because there exists some $k>0$ (depending on $d$ and $t$) such that $a'_k=a_ke^{-(d-1)t}>1$, hence
$$
\lim_{n\ra\infty}\frac{1}{n}\log C_n  \geq\lim_{n\ra\infty}\frac{1}{n}\log (a'_k)^
{\lfloor\frac{n+1}{k}\rfloor} 
 =\frac{1}{k}\log a'_k >0.
$$

We claim that the categorical entropy $h_t(\Phi)$ is the unique $\lambda>0$
satisfying
$$
\sum_{k\geq1}\frac{a'_k}{e^{k\lambda}}=1.
$$
Or equivalently,
$$
\sum_{k\geq 1}\frac{\chi(\cO(k))}{e^{k\lambda}}=e^{(d-1)t}.
$$
This will conclude the proof of the theorem.
We prove the claim in the following lemma,
with a slight change in notations.

\end{proof}

\begin{Lemma}
Let $\{a_n\}_{n\geq1}$ be a sequence of positive real numbers, and define
$\{C_n\}_{n\geq0}$ inductively by
\begin{equation}\label{Cn}
C_n=a_1C_{n-1}+a_2C_{n-2}\cdots+a_nC_0+a_{n+1}. \tag{**}
\end{equation}
Suppose that 
$$
\lim_{n\rightarrow\infty}\frac{1}{n}\log C_n=\lambda>0,
$$
then we have
$$
\sum_{k\geq1}\frac{a_k}{e^{k\lambda}}=1.
$$
\end{Lemma}

\begin{proof}
Assume that
$
\sum_{k\geq1}\frac{a_k}{e^{k\lambda}}>1.
$
Then there exists some $m>0$ so that
$
\sum_{k=1}^m\frac{a_k}{e^{k\lambda}}>1.
$
Moreover, there exists some $\lambda_1>\lambda$ such that
$
\sum_{k=1}^m\frac{a_k}{e^{k\lambda}}>\sum_{k=1}^m\frac{a_k}{e^{k\lambda_1}}>1.
$

Choose a constant $D_1>0$ such that $C_k>D_1e^{(k+1)\lambda_1}$
for all $0\leq k\leq m$.
We can then prove by induction that $C_n>D_1e^{(n+1)\lambda_1}$ holds for all $n\geq0$.
Indeed, by the induction hypothesis, (\ref{Cn}) and
$\sum_{k=1}^m\frac{a_k}{e^{k\lambda_1}}>1$, for $n>m$, we have
\begin{equation*}
\begin{split}
C_n & >a_1C_{n-1}+\cdots+a_mC_{n-m} \\
  & > D_1(a_1e^{n\lambda_1}+\cdots+a_me^{(n-m+1)\lambda_1}) \\
  & > D_1e^{(n+1)\lambda_1}.
\end{split}
\end{equation*}

However, this contradicts with the assumption that
$
\lim_{n\rightarrow\infty}\frac{1}{n}\log C_n=\lambda
$
since $\lambda_1>\lambda$.
Hence we have
$
\sum_{k\geq1}\frac{a_k}{e^{k\lambda}}\leq1.
$

On the other hand, assume that 
$
\sum_{k\geq1}\frac{a_k}{e^{k\lambda}}<1.
$
Then there exists some $0<\lambda_2<\lambda$ such that
$
\sum_{k\geq1}\frac{a_k}{e^{k\lambda}}<\sum_{k\geq1}\frac{a_k}{e^{k\lambda_2}}<1.
$

Choose a constant $D_2>1$ such that $C_0<D_2e^{\lambda_2}$.
We can prove by induction that $C_n<D_2e^{(n+1)\lambda_2}$ for all $n\geq0$.
Indeed, by induction hypothesis, (\ref{Cn}) and
$\sum_{k\geq1}\frac{a_k}{e^{k\lambda_2}}<1$, we have
$$
C_n  < D_2(a_1e^{n\lambda_2}+\cdots+a_ne^{\lambda_2}+a_{n+1}) 
   < D_2 e^{(n+1)\lambda_2}.
$$

This again contradicts with the assumption that
$
\lim_{n\rightarrow\infty}\frac{1}{n}\log C_n=\lambda
$
since $\lambda_2<\lambda$.

This concludes the proof of the lemma.
\end{proof}

\section{Counterexample of Kikuta-Takahashi}
Using Theorem \ref{Thm:A}, we can now construct counterexamples of Conjecture~\ref{Conj:KT}.

\begin{Prop}\label{Prop:B}
For any even integer $d\geq4$,
let $X$ be a Calabi--Yau hypersurface in $\C\PP^{d+1}$ of degree $(d+2)$ and $\Phi=\T_{\cO}\circ(-\otimes\cO(-1))$. Then
$$
h_0(\Phi)> 0=\log\rho(\Phi_{H^*}).
$$
In particular, Conjecture \ref{Conj:KT} fails in this case.
\end{Prop}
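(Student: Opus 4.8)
The positivity $h_0(\Phi)>0$ is immediate from Theorem~\ref{Thm:A}, which asserts that $h_t(\Phi)$ is positive for every $t\in\R$; so the entire content is to show $\rho(\Phi_{H^*})=1$. Since $X\subset\PP^{d+1}$ is an even-dimensional smooth hypersurface, the Lefschetz hyperplane theorem and Poincar\'e duality give $H^{\mathrm{odd}}(X;\C)=0$ together with a decomposition $H^*(X;\C)=A\oplus P$, where $A=\bigoplus_{i=0}^d\C\cdot H^i$ is spanned by the powers of the hyperplane class and $P\subset H^d(X;\C)$ is the primitive middle cohomology. The plan is to first check that $\Phi_{H^*}$ preserves this decomposition and restricts to the identity on $P$: the functor $-\otimes\cO(-1)$ acts by cup product with $e^{-H}$, which is the identity on $P$ because $H\cup P=0$ for primitive middle classes; and $\T_{\cO}$ acts by the Mukai reflection $\alpha\mapsto\alpha-\langle v(\cO),\alpha\rangle\,v(\cO)$ in the Mukai vector $v(\cO)=\sqrt{\td(X)}$, which fixes $P$ pointwise because $v(\cO)$ is a polynomial in $H$ and is therefore Mukai-orthogonal to $P$ (the only graded piece of $v(\cO)$ that can pair with $H^d$ is a multiple of $H^{d/2}\in\mathrm{Im}(\cup H)$, which is orthogonal to primitive classes). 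It then suffices to prove $\rho(\Phi_{H^*}|_A)=1$ on the $(d+1)$-dimensional piece $A$.

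On $A$ I would pass to the numerical Grothendieck group and use the Chern-character identification $A\cong\C[H]/(H^{d+1})$, under which $\Phi_{H^*}|_A$ becomes $f\mapsto e^{-H}f-\big(\int_X e^{-H}f\,\td(X)\big)\cdot 1$, namely multiplication by $e^{-H}$ followed by the rank-one correction coming from $\T_{\cO}$. To find the eigenvalues I would solve $(e^{-H}-\lambda)f=c\cdot 1$ for a scalar $c$; inverting $e^{-H}-\lambda$ by a geometric series and imposing the consistency condition $c=\int_X e^{-H}f\,\td(X)$ turns the eigenvalue equation into the single scalar identity $\sum_{m\geq1}\chi(\cO(-m))\,\lambda^{-m}=-1$. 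Here the evenness of $d$ enters decisively: Serre duality gives $\chi(\cO(-m))=(-1)^d\chi(\cO(m))$, which equals $a_m$ precisely because $d$ is even, so the right-hand side is $-1$ rather than $+1$ (the sign $+1$ in the odd case would instead produce a real root off the unit circle, as in Remark~\ref{Rmk:Genki}).

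Finally I would evaluate this identity via the Hilbert series of $X$. As $X$ is a degree-$(d+2)$ hypersurface its homogeneous coordinate ring has Hilbert series $(1-z^{d+2})/(1-z)^{d+2}=S(z)/(1-z)^{d+1}$ with $S(z)=1+z+\cdots+z^{d+1}$, so $\sum_{m\geq1}a_m z^m=S(z)/(1-z)^{d+1}-1$. Substituting $z=\lambda^{-1}$, the $(1-z)^{d+1}$ terms cancel and the equation $\sum_{m\geq1}a_m\lambda^{-m}=-1$ collapses to $S(\lambda^{-1})=0$, i.e. $1+\lambda+\cdots+\lambda^{d+1}=0$. Hence the characteristic polynomial of $\Phi_{H^*}|_A$ is $(\lambda^{d+2}-1)/(\lambda-1)$, whose roots are the $(d+2)$-th roots of unity other than $1$; all lie on the unit circle, giving $\rho(\Phi_{H^*}|_A)=1$ and therefore $\rho(\Phi_{H^*})=1$. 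Combined with $h_0(\Phi)>0$ this yields $h_0(\Phi)>0=\log\rho(\Phi_{H^*})$ and contradicts Conjecture~\ref{Conj:KT}. The main obstacle is the bookkeeping in the middle step: one must fix the correct normalization of the Mukai reflection and of the pairing $\int_X(-)\,\td(X)$ so that the rank-one term is exactly right, and justify passing from the convergent series to the rational-function identity that produces the clean scalar equation; once this is in place the Hilbert-series cancellation does the rest.
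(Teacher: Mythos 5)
Your proof is correct, but it takes a genuinely different route from the paper's. The paper never touches the Lefschetz decomposition or the Hilbert series: it introduces the ``inverse-direction'' autoequivalence $\Phi'=\T_{\cO}\circ(-\otimes\cO(1))$, identifies it under Orlov's equivalence $\D^b(X)\simeq\HMF(W)$ with the grade-shift functor $\tau$ satisfying $\tau^{d+2}=[2]$ (via \cite[Proposition 5.8]{BFK}), so that $(\Phi')_{H^*}^{d+2}=\id$, and then uses the fact that $(\T_{\cO})_{H^*}$ is an involution on even-dimensional strict Calabi--Yau manifolds (\cite[Corollary 8.13]{Huy}) to transfer this to $\Phi_{H^*}^{d+2}=\id_{H^*}$, whence $\rho(\Phi_{H^*})=1$. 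Your argument instead computes the spectrum directly: splitting $H^*(X;\C)=A\oplus P$, checking $\Phi_{H^*}|_P=\id$, and extracting the characteristic polynomial $1+\lambda+\cdots+\lambda^{d+1}$ on $A$ from the rank-one-update eigenvalue equation together with the Hilbert series $(1-z^{d+2})/(1-z)^{d+2}$. What you gain is a self-contained, elementary computation that needs no matrix factorization machinery, pinpoints the eigenvalues as the nontrivial $(d+2)$-th roots of unity, and makes explicit where evenness of $d$ enters (the sign $\chi(\cO(-m))=(-1)^d\chi(\cO(m))$ from Serre duality); what the paper's argument gains is brevity and a conceptual explanation, namely that $\Phi'$ is the monodromy around the Gepner point and $(\Phi')^{d+2}=[2]$ already at the categorical level. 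The two loose ends you flag yourself are real but routine: the Mukai-reflection normalization is handled by working in Chern-character coordinates, where $\T_{\cO}$ acts by $\ch(E)\mapsto\ch(E)-\chi(E)\cdot 1$, and the geometric-series inversion of $e^{-H}-\lambda$ in $\C[H]/(H^{d+1})$ is legitimate for $|\lambda|>1$ and extends to the polynomial identity for the characteristic polynomial by the standard determinant formula for rank-one updates; neither is a gap. Note also that your conclusion is consistent with, and slightly refines, the paper's: diagonalizability on $A$ with $(d+2)$-th root-of-unity eigenvalues plus triviality on $P$ recovers $\Phi_{H^*}^{d+2}=\id_{H^*}$.
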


\begin{proof}
By Theorem \ref{Thm:A}, we only need to show that the spectral radius $\rho(\Phi_{H^*})$ equals to 1.

Consider another autoequivalence $\Phi':=\T_{\cO}\circ(-\otimes\cO(1))$ on $\D^b(X)$.
By \cite[Proposition 5.8]{BFK}, there is a commutative diagram
    \begin{equation*}
        \xymatrix{\D^b(X)\ar[rr]^{\Phi'}\ar[d]_{\Psi}&&\D^b(X)\ar[d]^{\Psi}\\ 
	\HMF(W)\ar[rr]^{\tau}&&\HMF(W).}
    \end{equation*}
Here $W$ is the defining polynomial of $X$,
$\HMF(W)$ is the associated graded matrix factorization category,
$\Psi$ is an equivalence introduced by Orlov \cite{Orl2},
and $\tau$ is the grade shift functor on $\HMF(W)$
which satisfies $\tau^{d+2}=[2]$.
Hence we have $(\Phi')^{d+2}=[2]$ and $(\Phi')_{H^*}^{d+2}=\id_{H^*}$.

On the other hand, $(\T_{\cO})_{H^*}$ is an involution on $H^*(X;\C)$ when $X$ is an even dimensional strict Calabi--Yau manifold (\cite[Corollary 8.13]{Huy}).
Thus $(\T_{\cO})_{H^*}=(\T_{\cO})_{H^*}^{-1}$.
Hence we also have $\Phi_{H^*}^{d+2}=\id_{H^*}$, which implies that $\rho(\Phi_{H^*})=1$.
\end{proof}

\begin{Rmk}
The autoequivalence $\Phi'$ that we considered in the proof is the one that corresponds to the monodromy around the Gepner point ($\Z_{d+2}$-orbifold point) in the K\"ahler moduli of $X$.
\end{Rmk}

\begin{Rmk}[Ouchi]\label{Rmk:Genki}
The functor $\Phi=\T_{\cO}\circ(-\otimes\cO(-1))$ does not produce counterexamples of Conjecture \ref{Conj:KT} if $X$ is an \emph{odd} dimensional Calabi--Yau manifold.
When $X$ is of odd dimension, Lemma \ref{Lemma:degree} implies
$$
h_0(\Phi)=\lim_{n\ra\infty}\frac{1}{n}\log\chi(G,\Phi^n(G'))\leq\log\rho([\Phi]).
$$
On the other hand, we have $h_0(\Phi)\geq\log\rho([\Phi])$ by Kikuta-Shiraishi-Takahashi \cite[Theorem 2.13]{KST}.
\end{Rmk}

\subsection*{Acknowledgement}
I would like to thank Fabian~Haiden for suggesting this problem to me,
and Genki~Ouchi for reading and pointing out Remark \ref{Rmk:Genki}.
I would also like to thank Philip~Engel, Hansol~Hong, Atsushi~Kanazawa, Koji~Shimizu, Yukinobu~Toda and Cheng-Chiang~Tsai for helpful conversations and correspondences.
Finally, I would like to thank Shing-Tung~Yau and Harvard University Math Department for warm support,
and anonymous referees for suggesting several improvements.
This work is partially supported by Simons Collaboration Grant on Homological Mirror Symmetry.

\bigskip

\noindent Department of Mathematics, Harvard University, 
One Oxford Street, Cambridge, MA 02138, USA.

\smallskip

\noindent email: {\tt ywfan@math.harvard.edu}

\end{document}